\renewcommand{\S}{\mathbb S}
\renewcommand{\H}{\mathbb H}
\newcommand{\CP}{\mathbb C\mathrm P}
\newcommand{\CH}{\mathbb C\mathrm H}
\newcommand{\Z}{\mathbb Z}
\newcommand{\R}{\mathbb R}
\newcommand{\set}[1]{ \left\{ #1 \right\} }
\newcommand{\sol}{\mathrm{Sol}^4_0}
\newcommand{\Id}{\mathrm{Id}}
\newcommand{\SO}{\mathrm{SO}}
\renewcommand{\O}{\mathrm{O}}
\newcommand{\U}{\mathrm U}
\newcommand{\SL}{\mathrm{SL}}
\renewcommand{\phi}{\varphi}
\theoremstyle{plain}
\newtheorem{theorem}{Theorem}[section]
\newtheorem{proposition}[theorem]{Proposition}
\newtheorem{corollary}[theorem]{Corollary}
\newtheorem{definition}[theorem]{Definition}
\newtheorem{remark}[theorem]{Remark}
\numberwithin{equation}{section}
\title[Parallel and totally umbilical hypersurfaces of Sol$_0^4$]{Parallel and totally umbilical hypersurfaces of the four-dimensional Thurston geometry $\text{Sol}^4_0$}
\author{Marie D'haene \and Jun-ichi Inoguchi \and Joeri Van der Veken}
\address{M. D'haene and J. Van der Veken, KU Leuven, Department of Mathematics, Celestijnenlaan 200 B -- Box 2400, 3001 Leuven, Belgium}
\email{marie.dhaene@kuleuven.be}
\email{joeri.vanderveken@kuleuven.be}
\address{J. Inoguchi,
Department of Mathematics,
Hokkaido University,
Sapporo 060-0810,
Japan}
\email{inoguchi@math.sci.hokudai.ac.jp}
\thanks{
M.\ D'haene is supported by the Methusalem grant METH/15/026 of the Flemish Government. \\
J.\ Inoguchi is partially supported by JSPS KAKENHI Grant Number 19K03461. \\
J.\ Van der Veken is supported by the Research Foundation--Flanders (FWO) and the National Natural Science Foundation of China (NSFC) under collaboration project G0F2319N, by the KU Leuven Research Fund under project 3E210539 and by the Research Foundation--Flanders (FWO) and the Fonds de la Recherche Scientifique (FNRS) under EOS Projects G0H4518N and G0I2222N}
\date{}
\keywords{
Thurston geometry, solvable Lie group,
Codazzi hypersurface, totally geodesic, totally umbilical, parallel second fundamental form}
\subjclass[2020]{Primary 53C42; Secondary 53C30}
\begin{document}

\begin{abstract}
We study hypersurfaces of the four-dimensional Thurston geometry $\sol$, which is a Riemannian homogeneous space and a solvable Lie group.
In particular, we give a full classification of hypersurfaces whose second fundamental form is a Codazzi tensor --including totally geodesic hypersurfaces and hypersurfaces with parallel second fundamental form-- and of totally umbilical hypersurfaces of $\sol$.
We also give a closed expression for the Riemann curvature tensor of $\sol$, using two integrable complex structures.
\end{abstract}

\maketitle

Located on the crossroads of geometry, algebra and group theory, Riemannian homogeneous spaces are a fundamental class of manifolds to study.
Thurston geometries form an interesting subset of Riemannian homogeneous spaces, which have been studied extensively in dimension three due to Thurston's geometrization for three-manifolds.
This is a deep result listing the possible Riemannian structures of compact orientable three-manifolds, very similar to the uniformization theorem for compact orientable surfaces.
Roughly stated, any three-manifold can be decomposed into pieces, each of which admits a Riemannian metric locally isometric to one of eight three-dimensional model spaces, the Thurston geometries $\R^3$, $\S^3$, $\H^3$, $\S^2\times\R$, $\H^2\times\R$, $\widetilde{\SL}(2,\R)$, $\text{Nil}^3$ and $\text{Sol}^3$.
Up until today, no such geometrization is known in dimension four.
However, the study of four-dimensional Thurston geometries, and in particular of their submanifolds, is interesting on its own from a Riemannian geometric point of view.
Moreover it is an attempt to shed light on the world of four-manifolds using low-dimensional techniques.
There are 19 types of Thurston geometries in dimension four.
We have listed them in Table~\ref{tab:geometries}, ordered by the stabilizer of the action of the \emph{connected component of the identity} of their isometry groups.
This list is based on \cite{Filip} and \cite{Wall}.
\begin{table}[h]
	\centering
	\begin{tabular}{c c}
		\toprule
		Thurston geometry & Stabilizer \\
		\midrule
		$\R^4,\; \S^4,\; \H^4$
		& $\SO(4)$ \\[1ex]
		$\CP^2,\; \CH^2$
		& $\U(2)$ \\[1ex]
		$\S^3 \times \R,\; \H^3 \times \R$
		& $\SO(3)$ \\[1ex]
		\makecell{$\S^2 \times \R^2,\; \S^2 \times \S^2$ \\ $\H^2 \times \S^2,\; \H^2 \times \R^2,\; \H^2 \times \H^2$}
		& $\SO(2) \times \SO(2)$ \\[2.5ex]
		$\sol,\; \widetilde{\SL}(2,\R) \times \R,\; \text{Nil}^3 \times \R$
		& $\SO(2)$ \\[1ex]
		$\text{F}^4$ & $(\S^1)_{1,2}$ \\[1ex]
		$\text{Nil}^4,\; \text{Sol}^4_1,\; \text{Sol}^4_{m,n}$
		& $\set{1}$ \\
		\bottomrule \\
	\end{tabular}
	\caption{Four-dimensional Thurston geometries.}
	\label{tab:geometries}
\end{table}

The main goal of the present paper is to classify some fundamental families of hypersurfaces of $\sol$.
Theorem~\ref{thm:codazzi} gives a complete classification of hypersurfaces of $\sol$ for which the second fundamental form $h$ is a Codazzi tensor, also called \emph{Cod\-azzi hypersurfaces}.
They are defined by the property that the covariant derivative $\nabla h$ is totally symmetric, where $\nabla$ is the Levi Civita connection of the induced metric.
Obviously, the class of Codazzi hypersurfaces contains the class of hypersurfaces with parallel second fundamental form, also called \emph{parallel hypersurfaces}, for which $\nabla h = 0$.
This class, in its turn, contains the class of \emph{totally geodesic hypersurfaces}, for which $h=0$.
The latter two families of hypersurfaces of $\sol$ are classified in Corollary~\ref{cor:parallel} and Corollary~\ref{cor:tot-geod} respectively and have interesting geometric interpretations.
Any geodesic of a totally geodesic submanifold is also a geodesic of the ambient space, whereas parallel submanifolds can be thought of as submanifolds which `look the same everywhere from an extrinsic viewpoint', since the second fundamental form contains all the extrinsic geometric information about a submanifold.
We also consider another class containing the totally geodesic hypersurfaces, namely \emph{totally umbilical hypersurfaces}.
They are defined by the property $h(X,Y)=g(X,Y)H$ for all tangent vector fields $X$ and $Y$, where $g$ is the induced metric and $H$ is a fixed normal vector field, which is necessarily the mean curvature vector field.
Alternatively, all shape operators of a totally umbilical hypersurface are multiples of the identity transformation, which means that, at any of its points, such a hypersurface `looks the same in any direction'.
A full classification of totally umbilical hypersurfaces of $\sol$ can be found in Theorem~\ref{thm:totally-umbilical}.

These results fit in the rich history of research on special submanifolds of Thurston spaces.
We mention some relevant results below, without aiming to provide a complete list.
The classification of totally geodesic and parallel submanifolds of the real space forms $\R^n$, $\S^n$ and $\H^n$ goes back to independent work in~\cite{Backes-Reckziegel} and~\cite{Takeuchi}, whereas totally geodesic and parallel hypersurfaces of real space forms were already classified in~\cite{Lawson}.
A study of parallel submanifolds of the complex space forms $\CP^n$ and $\CH^n$ can be found in~\cite{naitoh1983parallel} and a classification of parallel and totally umbilical hypersurfaces of $\S^n \times \R$ and $\H^n \times \R$ in \cite{VV} and \cite{CKV}.
In dimension three, a classification of totally umbilical surfaces in $\S^2 \times \R$ and $\H^2 \times \R$,
together with the remark that such surfaces do not exist in $\widetilde{\SL}(2,\R)$ and $\text{Nil}^3$, was independently obtained in \cite{toub-souam} and \cite{vdv-bcv}.
The former paper also contains the classification of totally umbilical surfaces in $\text{Sol}^3$.
In this context, we should also mention the fundamental existence and uniqueness theorem for isometric immersions of surfaces into three-dimensional homogeneous spaces with four-dimensional isometry groups, which are essentially the Thurston spaces $\S^2\times\R$, $\H^2\times\R$, $\widetilde{\SL}(2,\R)$, $\text{Nil}^3$ and Berger spheres, obtained in \cite{daniel2007isometric}.
Parallel and totally geodesic surfaces in these spaces were classified in \cite{Belkhelfa-Dillen-Inoguchi}, while the classification in all remaining three-dimensional homogeneous spaces, including $\text{Sol}^3$, can be found in \cite{I-VdV-2007} and \cite{I-VdV-2008}.
Finally, it was shown in \cite{deLVdV} that there are no Codazzi hypersurfaces in the four-dimensional Thurston geometry $\text F^4$,
while the recent preprint \cite{nil4} contains a classification of Codazzi hypersurfaces of $\text{Nil}^4$.

The paper is organized as follows: in Section~\ref{sec:prelim} we briefly recall some definitions and fundamental formulas concerning hypersurfaces.
In Section~\ref{sec:sol40} we describe $\sol$ as a Lie group, a Thurston geometry and a Riemannian manifold.
In particular, we give an invariant expression for its curvature tensor in terms of two complex structures and a projection operator (formula \eqref{eq:R}).
Finally, in Sections~\ref{sec:codazzi} and~\ref{sec:tot-umb} we classify Codazzi hypersurfaces and totally umbilical hypersurfaces of $\sol$, respectively.
The main results are Theorem~\ref{thm:codazzi}, Corollary~\ref{cor:parallel}, Corollary~\ref{cor:tot-geod} and Theorem~\ref{thm:totally-umbilical}.

\section{Preliminaries}
\label{sec:prelim}
Let $(\tilde{M},\tilde g)$ be a Riemannian manifold with Levi-Civita connection $\tilde \nabla$ and consider an isometrically immersed hypersurface $(M,g)$ of $(\tilde{M},\tilde g)$. The Gauss formula relates the Levi-Civita connection $\nabla$ of the hypersurface to the one of the ambient space:
\begin{equation}
	\label{eq:gauss-formula}
\tilde\nabla_{X}Y=\nabla_{X}Y + h(X,Y)
\end{equation}
for $X,Y \in \mathfrak X(M)$.
Here, $h$ is the {second fundamental form}, a symmetric tensor which takes values in the normal bundle of $M$.
Any normal vector field $\xi$ induces a self-adjoint endomorphism $A_\xi: \mathfrak X(M) \to \mathfrak X(M)$, called the {shape operator} associated to~$\xi$, via the Weingarten formula:
\begin{equation}
	\label{eq:weingarten-formula}
\tilde\nabla_{X}\xi = -A_\xi X + \nabla^\perp_X \xi
\end{equation}
for $X \in \mathfrak X(M)$.
Here, $\nabla^\perp$ is the normal connection.
From the definition, one can see that $A_\xi$ is metrically equivalent to $h$, meaning $\tilde g(h(\cdot\,,\cdot),\xi) = g(A_\xi \, \cdot\,,\cdot)$. If $N$ is a local \emph{unit} normal vector field, then the Weingarten formula reduces to
\[
	\tilde \nabla_X N = -A_N X.
\]
The {mean curvature vector field} $H$ is defined as $H=\tfrac1m\text{trace}_{g}(h)$ where $m$ is the dimension of $M$.
If $\set{E_1,\ldots,E_m}$ is a local orthonormal frame on $M$, then $H$ can be computed by
\[
	H = \frac1m \sum_{i=1}^m h(E_i,E_i).
\]

In Sections \ref{sec:codazzi} and \ref{sec:tot-umb} we consider special types of hypersurfaces of $\sol$. We collect their definitions below. Note that  the covariant derivative of $h$ is defined as
\[
	(\nabla h)(X,Y,Z) = \nabla^\perp_X h(Y,Z) - h(\nabla_XY,Z) - h(Y,\nabla_XZ)
\]
for vector fields $X$, $Y$ and $Z$ tangent to the hypersurface.

\begin{definition}
	\label{def:submanifolds}
Using the notations introduced above, we say that a hypersurface is
\begin{enumerate}[(a)]
	\item totally geodesic if $h=0$,
	\item parallel if $\nabla h = 0$,
	\item Codazzi if $\nabla h$ is symmetric in its three components,
	\item totally umbilical if $h(\cdot\,,\cdot)=g(\cdot\,,\cdot)H$.
\end{enumerate}
\end{definition}
Note that, since $h$ is symmetric, if suffices to require that $\nabla h$ is symmetric in the first two components for a hypersurface to be Codazzi.
Also, if $N$ is a unit normal vector field to a hypersurface and we set $\lambda = \tilde g(H,N)$, the condition for being totally umbilical is equivalent to $A_N = \lambda\,\Id$.

The {equations of Gauss and Codazzi} relate the Riemann curvature tensor $\tilde R$ of $(\tilde M, \tilde g)$ to the Riemann curvature tensor $R$ of $(M,g)$:
\begin{align*}
& (\tilde R(X,Y)Z)^t  =
  R(X,Y)Z - A_{h(Y,Z)}X + A_{h(X,Z)}Y, \tag{Gauss} \\
  & (\tilde R(X,Y)Z)^n  =
  (\nabla h)(X,Y,Z) - (\nabla h)(Y,X,Z) \tag{Codazzi}
\end{align*}
for $X,Y,Z \in \mathfrak X(M)$, where the superscripts $t$ and $n$ denote the tangent and normal components of a vector field along $M$.

\section{The geometry of $\sol$}
\label{sec:sol40}
In this section we give a description of $\sol$, partly based on~\cite{j-traj}.

\subsection*{The Lie group Sol\raisebox{0.1em}{\textsuperscript{4}}\kern-5pt\raisebox{-0.1em}{\textsubscript{0}}}
The underlying manifold of the Thurston geometry $\sol$ is the connected solvable Lie group consisting of matrices of the form
\[
  M(x,y,z,t) =
	\left(
  \begin{array}{cccc}
  e^{t} & 0 & 0 &x\\
  0 & e^{t} &0 & y\\
  0 & 0 & e^{-2t} & z\\
  0 & 0 & 0 &1
  \end{array}
  \right),
\]
where $x,y,z,t \in \R$.
Equivalently, $\sol$ is the Lie group $(\R^{4},\cdot\,)$ with group operation
\begin{equation}
\label{eq:groupoperation}
(a,b,c,d)\cdot
(x,y,z,t)
=(a+e^{d}x,\, b+e^{d}y,\, c+e^{-2d}z,\, d+t).
\end{equation}
We use the identification of $M(x,y,z,t)$ with the tuple $(x,y,z,t)$ throughout the rest of the paper.
The inverse of $(x,y,z,t)$ is given by
\[
(x,y,z,t)^{-1}=(-e^{-t}x,-e^{-t}y,-e^{2t}z,-t).
\]

The Lie algebra of $\sol$ is spanned by the basis $\set{e_1,e_2,e_3,e_4}$ given by
\begin{alignat*}{2}
	e_1 &=
	\left(
	\begin{array}{cccc}
	0 & 0 & 0  &1\\
	0 & 0 &  0 & 0\\
	0 & 0 & 0 & 0\\
	0 & 0 & 0 & 0
	\end{array}
	\right), \qquad
	e_2 &&=
	\left(
	\begin{array}{cccc}
	 0 & 0 & 0  &0\\
	 0 & 0 &  0 & 1\\
	 0 & 0 & 0 & 0\\
	 0 & 0 & 0 &0
	\end{array}
	\right), \\[1ex]
	e_3 &=
	\left(
	\begin{array}{cccc}
	 0 & 0 & 0  &0\\
	 0 & 0 &  0 & 0\\
	 0 & 0 & 0 & 1\\
	 0 & 0 & 0 &0
	\end{array}
	\right), \qquad
	e_4 &&=
	\left(
	\begin{array}{cccc}
	1 & 0 & 0 & 0  \\
	0 & 1 & 0 &  0 \\
	0 & 0 & -2 & 0 \\
	 0 & 0 & 0 &0
	\end{array}
	\right).
\end{alignat*}
The left invariant vector fields determined by
$e_1$, $e_2$, $e_3$ and $e_4$ are
\begin{equation}
	\label{eq:onf}
	E_1=e^{t}\frac{\partial}{\partial x},
	\quad
	E_2=e^{t}\frac{\partial}{\partial y},
	\quad
	E_3=e^{-2t}\frac{\partial}{\partial z},
	\quad
	E_4=\frac{\partial}{\partial t}.
\end{equation}
These vector fields satisfy the commutation relations
\begin{equation} \label{eq:Lie_brackets}
\begin{aligned}
& [E_1 , E_2 ] = [E_1 , E_3 ] = [E_2 , E_3 ] = 0, \\
& [E_1 , E_4 ] = -E_1, \quad
  [E_2 , E_4 ] = -E_2, \quad
  [E_3 , E_4 ] = 2E_3.
\end{aligned}
\end{equation}

\subsection*{The Thurston geometry Sol\raisebox{0.1em}{\textsuperscript{4}}\kern-5pt\raisebox{-0.1em}{\textsubscript{0}}}
A Thurston geometry is a pair of a manifold and a Lie group acting on it.
The Lie group that acts on $\sol$ making it a Thurston geometry is the semi-direct product $\sol \rtimes \left( \O(2) \times \Z_2 \right)$ (see for example~\cite{hillman2002four}).
Here, $\sol$ acts on itself by left translations, $\O(2)$ acts by rotations and reflections in the $xy$-plane and $\Z_2$ acts by reflections in the $z$-coordinate.
Hence the stabilizer of the identity is $\O(2) \times \Z_2$.
Note that the stabilizers in Table~\ref{tab:geometries} are those of the actions of the \emph{connected components of the identity} of the groups.
In the case of $\sol$, that stabilizer is $\SO(2)$.

\subsection*{The Riemannian manifold Sol\raisebox{0.1em}{\textsuperscript{4}}\kern-5pt\raisebox{-0.1em}{\textsubscript{0}}}
Since $\sol$ is a reductive homogeneous space, there is a bijective correspondence between Riemannian metrics invariant under the action of $\sol \rtimes \left( \O(2) \times \Z_2 \right)$ on the one hand, and inner products on the Lie algebra of $\sol$ invariant under the adjoint action of the stabilizer, $\O(2) \times \Z_2$, on the other hand.
By determining these inner products and left translating them to the whole manifold, one finds that any invariant Riemannian metric is of the form
\begin{equation*}
	\tau_1 e^{-2t}(dx^2 + dy^2) + \tau_2 e^{4t} dz^2 + \tau_3 dt^2,
\end{equation*}
where $\tau_1,\tau_2,\tau_3 > 0$.
Via a homothety we may set $\tau_3 = 1$. Moreover, by rescaling the $x$-, $y$- and $z$-coordinates we can also eliminate the parameters $\tau_1$ and $\tau_2$. Hence, there is only one metric invariant under the action of $\sol \rtimes (\O(2) \times \Z_2)$ up to homotheties, namely
\begin{equation}
	\label{eq:metric}
	\tilde g = e^{-2t}(dx^2 + dy^2) + e^{4t} dz^2 +  dt^2.
\end{equation}
From now on, we will always consider this metric $\tilde g$ on $\sol$. Note that the frame $\{E_1,E_2,E_3,E_4\}$ from \eqref{eq:onf} is then orthonormal.

The complex structures $J_+$ and $J_-$, defined by the following equalities, are integrable and compatible with~$\tilde g$:
\begin{equation}
	\label{eq:complex-structures}
	\!\begin{aligned}
		J_+E_1 = E_2,&\quad J_+E_2 = -E_1,& &J_+E_3 = E_4,& &J_+E_4 = -E_3, \\
		J_-E_1 = E_2,&\quad J_-E_2 = -E_1,& &J_-E_3 = -E_4,& &J_-E_4 = E_3.
	\end{aligned}
\end{equation}
None of these structures is K\"ahler, see Wall~\cite{Wall-LNM}.
However, both of them are  globally conformal K\"ahler, meaning that $\tilde g$ is globally conformal to a K\"ahler metric, namely~$e^{2t} \tilde g$.
This can be checked by verifying that the two-forms $e^{2t} \Omega_+$ and $e^{2t} \Omega_-$ are closed, where $\Omega_\pm(\cdot\,,\cdot)=\tilde g(\cdot\,,J_\pm \,\cdot)$.
It is also worth mentioning that, as a complex manifold, $\mathrm{Sol}_{0}^{4}$ is holomorphically isometric to the universal covering of an Inoue surface \cite{Inoue} equipped with Tricerri metric \cite{Tricerri, V87}.

Since the frame $\{E_1,E_2,E_3,E_4\}$ from \eqref{eq:onf} is orthonormal with Lie brackets \eqref{eq:Lie_brackets}, Koszul's formula can be used to compute the Levi-Civita connection $\tilde \nabla$ of $\tilde g$:
\begin{equation}
\label{eq:levicivitaconnection}
	\arraycolsep=10pt\def\arraystretch{1.2}
  \begin{array}{llll}
    \tilde \nabla_{E_1}E_1=E_4,
     & \tilde \nabla_{E_1}E_2=0,
     & \tilde \nabla_{E_1}E_3=0,
     & \tilde \nabla_{E_1}E_4=-E_1 ,  \\
    \tilde \nabla_{E_2}E_1=0,
     & \tilde \nabla_{E_2}E_2=E_4,
     & \tilde \nabla_{E_2}E_3=0,
     & \tilde \nabla_{E_2}E_4=-E_2 ,  \\
    \tilde \nabla_{E_3}E_1=0,
     & \tilde \nabla_{E_3}E_2=0,
     & \tilde \nabla_{E_3}E_3=-2E_4,
     & \tilde \nabla_{E_3}E_4=2E_3,   \\
    \tilde \nabla_{E_4}E_1=0,
     & \tilde \nabla_{E_4}E_2=0,
     & \tilde \nabla_{E_4}E_3=0,
     & \tilde \nabla_{E_4}E_4=0.
  \end{array}
\end{equation}
From the Levi-Civita connection one can compute the Riemann curvature tensor $\tilde R$ of $\sol$ to obtain the following non-zero components:
\begin{alignat*}{2}
  \tilde R(E_1,E_2)E_2 & = - E_1,
  &&\qquad \tilde R(E_2,E_3)E_3 = 2E_2 , \\
  \tilde R(E_1,E_3)E_3 & = 2 E_1,
  &&\qquad \tilde R(E_2,E_4)E_4 = -E_2, \\
  \tilde R(E_1,E_4)E_4 & = -E_1,
  &&\qquad \tilde R(E_3,E_4)E_4 = -4E_3.
\end{alignat*}
In particular, if $\tilde K_{ij}$ is the sectional curvature of the plane $\mathrm{span}\{E_i, E_j\}$, then
\begin{equation}
	\label{eq:sec-curv}
  \tilde K_{12} = -1,\quad \tilde K_{13} = 2,\quad \tilde K_{14} = -1,\quad \tilde K_{23} = 2,\quad \tilde K_{24} = -1,\quad \tilde K_{34} = -4.
\end{equation}

Using the complex structures $J_{+}$ and $J_{-}$ from \eqref{eq:complex-structures}, we have the following invariant expression for the curvature tensor of $\sol$:
\begin{equation} \label{eq:R}
\begin{aligned}
	\tilde R(X,Y)Z
	= \ & 2\, ( \tilde g(Y,Z) X - \tilde g(X,Z) Y ) \\
	& -\frac 12 \, ( \tilde g(J_+Y,Z)J_+X - \tilde g(J_+X,Z)J_+Y + 2 \, \tilde g(X,J_+Y)J_+Z ) \\
	& -\frac 12 \, ( \tilde g(J_-Y,Z)J_-X - \tilde g(J_-X,Z)J_-Y + 2 \, \tilde g(X,J_-Y)J_-Z ) \\
	& -3 \, ( \tilde g(PY,Z)X + \tilde g(Y,Z)PX - \tilde g(PX,Z)Y - \tilde g(X,Z)PY )
\end{aligned}
\end{equation}
for $X,Y,Z \in \mathfrak X(\sol)$, where $P$ denotes projection onto $E_4$, i.e., $P\,\cdot = \tilde g(\cdot\,,E_4)E_4$.

\begin{remark}
	\normalfont
It follows from \eqref{eq:R} that the tensors $J_+$, $J_-$ and $P$ play a fundamental role in the geometry of $\sol$. Since it may be useful for future research, we compute their covariant derivatives with respect to the Levi-Civita connection:
\begin{alignat*}{2}
	&(\tilde \nabla J_\pm)(X,Y)
	&&= -\tilde g(J_\pm Y,E_4)X + \tilde g(Y,E_4)J_\pm X + \tilde g(J_\pm Y,X) E_4 - \tilde g(Y,X)J_\pm E_4, \\
	&(\tilde \nabla P)(X,Y)
	&&= \frac 12 \, ( \tilde g(Y,X)E_4 + \tilde g(Y,E_4)X )
	- 4 \tilde g(Y,PX) E_4 \\
	& &&\quad
	+ \frac32 \, ( \tilde g(Y,J_+J_-X)E_4 + \tilde g(Y,E_4) J_+J_- X ).
\end{alignat*}
Note that the first formula shows again that $(\sol,\tilde g,J_{\pm})$ is not K\"ahler. To obtain this first formula, we used equation~(2.6) in~\cite{V76}. To obtain the second one, we noticed that $\tilde \nabla_X E_4 = \frac 12 \, X - 2 \, PX + \frac 32 \, J_+J_-X$ for any $X \in \mathfrak X(\sol)$.
\end{remark}

\section{Classification of Codazzi hypersurfaces of $\sol$}
\label{sec:codazzi}

In this section, we obtain a full classification of Codazzi hypersurfaces of $\sol$.
First, we describe the possible unit normal vector fields to such a hypersurface.

\begin{proposition}
\label{prop:codazzi}
Let $M$ be a Codazzi hypersurface of $\sol$.
Then a local unit normal vector field $N$ to $M$ takes one of the following forms with respect to the frame~\eqref{eq:onf}:
\begin{enumerate}[(a),wide=\parindent]
\item
$N = \pm E_3$,
\item
$N=\pm E_4$,
\item
$N = \cos\alpha\, E_1 + \sin\alpha\, E_2$ for some local smooth function $\alpha$ on $M$ satisfying
\[ E_3(\alpha) = E_4(\alpha) = 0. \]
\end{enumerate}

Conversely, there exist hypersurfaces of $\sol$ admitting a local unit normal vector field of any of the forms above.
\end{proposition}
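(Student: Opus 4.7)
The plan is to reduce the Codazzi condition to a pointwise algebraic requirement on $N$, classify the unit vectors that satisfy it by linear algebra, and then extract the differential condition on $\alpha$ from Frobenius integrability. First, by the Codazzi equation recalled in Section~\ref{sec:prelim}, the hypersurface $M$ is Codazzi if and only if $(\tilde R(X,Y)Z)^n=0$ for all tangent $X,Y,Z$, i.e., $\tilde g(\tilde R(X,Y)Z,N)=0$. Using the usual curvature symmetry $\tilde g(\tilde R(X,Y)Z,N)=-\tilde g(\tilde R(X,Y)N,Z)$ together with the fact that $\tilde R(X,Y)$ is a skew-adjoint operator on $T\sol$ (so that $\tilde g(\tilde R(X,Y)N,N)=0$ automatically), this collapses to the clean equivalent statement
\[
  \tilde R(X,Y)N=0 \quad \text{for all } X,Y\in N^\perp.
\]

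Next I exploit the \emph{diagonal} structure of $\tilde R$ in the frame~\eqref{eq:onf}: the nonzero components listed just before~\eqref{eq:sec-curv} show that $\tilde R(E_i,E_j)E_k=0$ whenever $k\notin\{i,j\}$. Writing $N=aE_1+bE_2+cE_3+dE_4$ with $a^2+b^2+c^2+d^2=1$, this gives the compact identity $\tilde R(E_i,E_j)N=\tilde K_{ij}(n_jE_i-n_iE_j)$, and by bilinearity
\[
  \tilde R(X,Y)N=\sum_{i<j}\tilde K_{ij}\,\omega_{ij}\,(n_jE_i-n_iE_j), \qquad \omega_{ij}:=x_iy_j-x_jy_i,
\]
while the perpendicularity $X,Y\perp N$ translates to the four linear relations $\sum_i n_i\omega_{ij}=0$. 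Substituting these into the four $E_k$-components of $\tilde R(X,Y)N=0$ and inserting the sectional curvatures from~\eqref{eq:sec-curv}, the $E_3$- and $E_4$-components simplify to $-6d\,\omega_{34}=0$ and $3c\,\omega_{34}=0$. A short dimension count shows that when $(a,b)\ne(0,0)$ the constraint system still admits solutions with $\omega_{34}\ne 0$, forcing $c=d=0$; the $E_1$- and $E_2$-components are then automatic, giving case~(c). When $(a,b)=(0,0)$, the constraints already force $\omega_{34}=0$, and a direct check of the remaining components then forces exactly one of $c,d$ to vanish, yielding cases~(a) and~(b).

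For the conditions $E_3(\alpha)=E_4(\alpha)=0$ in case~(c) and for the converse existence, I rely on Frobenius integrability of $N^\perp$. Using~\eqref{eq:metric}, the dual one-form of $N$ is $N^\flat=e^{-t}(\cos\alpha\,dx+\sin\alpha\,dy)$, and a direct computation gives
\[
  N^\flat\wedge dN^\flat = -e^{-2t}\bigl(\partial_z\alpha\,dx\wedge dy\wedge dz+\partial_t\alpha\,dx\wedge dy\wedge dt\bigr),
\]
which vanishes precisely when $\partial_z\alpha=\partial_t\alpha=0$, equivalently $E_3(\alpha)=E_4(\alpha)=0$. Conversely, cases~(a) and~(b) are realized by the foliations $\{z=\mathrm{const}\}$ and $\{t=\mathrm{const}\}$ respectively, and for any smooth $\alpha=\alpha(x,y)$ the same identity shows $N^\flat\wedge dN^\flat=0$, so Frobenius produces a foliation of $\sol$ by hypersurfaces with normal $\cos\alpha\,E_1+\sin\alpha\,E_2$. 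The main obstacle is the dimension-count lemma hidden in the second paragraph, namely that $\omega_{34}$ is genuinely free within the constraint system whenever $(a,b)\ne(0,0)$; it is a small piece of linear algebra, but it is what drives the entire trichotomy.
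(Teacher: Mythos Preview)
Your argument is correct and reaches the same trichotomy as the paper, but the route is organised differently. The paper parametrises $N$ in polar form $a=r\cos\alpha$, $b=r\sin\alpha$, $c=\sqrt{1-r^2}\cos\beta$, $d=\sqrt{1-r^2}\sin\beta$, introduces a concrete tangent frame $T_1,T_2,T_3$, and evaluates the reduced Codazzi identity $\tilde g(\tilde R(X,Y)Z,N)=0$ on two well-chosen triples: $(T_2,T_3,T_1)$ gives $6r^2(1-r^2)=0$, and in the subcase $r=0$ the triple $(T_1,E_1,E_1)$ gives $3\cos\beta\sin\beta=0$. You instead rewrite the condition as $\tilde R(X,Y)N=0$ for all tangent $X,Y$, exploit the diagonality of $\tilde R$ in the frame~\eqref{eq:onf} to express each $E_k$-component as a linear combination of the Pl\"ucker coordinates $\omega_{ij}$, and then feed in the linear constraints $\sum_i n_i\omega_{ij}=0$ to reduce the $E_3$- and $E_4$-components to $-6d\,\omega_{34}$ and $3c\,\omega_{34}$. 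The dichotomy then hinges on whether $\omega_{34}$ can be nonzero for $X,Y\in N^\perp$, i.e.\ whether the projection of $N^\perp$ onto $\operatorname{span}\{E_3,E_4\}$ is two-dimensional, which happens precisely when $(a,b)\ne(0,0)$. This is a more structural argument than the paper's ad hoc choice of test triples and makes transparent \emph{why} the split occurs; the paper's version, by contrast, is quicker to execute and avoids the $\omega_{ij}$ bookkeeping. For the integrability step the paper computes the Lie brackets of the tangent frame directly, while you use the dual Frobenius criterion $N^\flat\wedge dN^\flat=0$; these are equivalent, though in the forward direction you should note that the relevant vanishing is only along $M$, which still yields $\partial_z\alpha=\partial_t\alpha=0$ because $\partial_z$ and $\partial_t$ are tangent to $M$ in case~(c) and hence those derivatives are intrinsic to $\alpha|_M$.
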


\begin{proof}
	Let $M$ be a Codazzi hypersurface of $\sol$ and assume that its unit normal vector field takes the form  $N = aE_1 + bE_2 + cE_3 + dE_4$ for some local smooth functions $a$, $b$, $c$ and $d$ on $M$, satisfying $a^2+b^2+c^2+d^2=1$.
	Then the following three vector fields form a local orthonormal frame on $M$:
	\begin{equation}
	\label{eq:tangent-frame}
	\begin{aligned}
		& T_1 = bE_1  -aE_2 + dE_3 -cE_4, \\
		& T_2 = cE_1 -dE_2 -aE_3 + bE_4, \\
		& T_3 = dE_1 + cE_2  -bE_3 -aE_4.
	\end{aligned}
	\end{equation}
	Note that there exist local smooth functions $r$, $\alpha$ and $\beta$ on $M$ such that $a = r\cos\alpha$, $b = r\sin\alpha$, $c = \sqrt{1-r^2}\cos\beta$ and $d = \sqrt{1-r^2}\sin\beta$.

	The Codazzi equation for Codazzi hypersurfaces reduces to
	\begin{equation}
	\label{eq:codazzi-cod}
	\tilde g(\tilde R(X,Y)Z, \, N) = 0
	\end{equation}
	for $X,Y,Z \in \mathfrak X (M)$.
	In particular, taking $X=T_2$, $Y=T_3$ and $Z=T_1$ yields $6r^2(1-r^2) = 0$.
	Since $r$ is smooth, it follows that $r$ is locally constant, namely either $r = 0$ or $r = 1$.

	\emph{Case 1: $r=0$.} In this case, $N = \cos\beta\, E_3 + \sin\beta\, E_4$ and the orthogonal complement to $N$ is spanned by $E_1$, $E_2$ and $T_1=\sin\beta \, E_3 - \cos\beta \, E_4$.
	Plugging $X=T_1$, $Y=E_1$ and $Z=E_1$ in~\eqref{eq:codazzi-cod} yields $3\cos\beta\sin\beta = 0$.
	Since $\beta$ is smooth, this implies that $\beta$ is locally constant and $\cos\beta=0$ or $\sin\beta=0$.
	If $\sin\beta = 0$, then $N = \pm E_3$ and the distribution orthogonal to $N$ is spanned by $E_1$, $E_2$ and $E_4$, which is integrable by~\eqref{eq:Lie_brackets}.
	If $\cos\beta = 0$, then $N = \pm E_4$ and the distribution orthogonal to $N$ is spanned by $E_1$, $E_2$ and $E_3$, which is also integrable by~\eqref{eq:Lie_brackets}.
	The integrability of these distributions proves the converse.

	\emph{Case 2: $r=1$.} In this case, $N = \cos\alpha\, E_1 + \sin\alpha\, E_2$ and the orthogonal complement of $N$ is spanned by $E_3$, $E_4$ and $T_1=\sin\alpha \, E_1 - \cos\alpha \, E_2$.
	This distribution is integrable if and only if $E_3(\alpha) = E_4(\alpha) = 0$ by~\eqref{eq:Lie_brackets}.
	Note that this also proves the converse statement.
\end{proof}

We can now explicitly describe all Codazzi hypersurfaces of $\sol$.

\begin{theorem}
\label{thm:codazzi}
Let $M$ be a connected Codazzi hypersurface of $\sol$. Then one of the following holds.
\begin{enumerate}[(a),leftmargin=*]
\item $M$ is an open subset of $\set{M(x,y,z,t) \in \sol \mid z = c}$ for some $c \in \R$ and has constant sectional curvature $-1$.
\item $M$ is an open subset of $\set{M(x,y,z,t) \in \sol \mid t = c}$ for some $c \in \R$ and has constant sectional curvature $0$.
\item There exist local coordinates $(u_1,u_2,u_3)$ on $M$ such that the immersion of $M$ into $\sol$ is given by $F(u_1,u_2,u_3) =( \gamma_1(u_1),\gamma_2(u_1),u_2,u_3)$, where $\gamma=(\gamma_1,\gamma_2)$ is any smooth curve in the $xy$-plane.
\end{enumerate}

Conversely, all of the hypersurfaces listed above are Codazzi hypersurfaces.
\end{theorem}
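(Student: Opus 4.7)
The plan is to combine Proposition~\ref{prop:codazzi} with an explicit integration of the tangent distribution in each admissible case, and to verify the Codazzi condition by direct computation for the converse.

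By Proposition~\ref{prop:codazzi}, at each point the local unit normal $N$ falls into one of the three cases (a), (b), (c). By connectedness of $M$ and the local constancy of the relevant scalars in the proof of that proposition, exactly one case occurs throughout $M$. In case (a), $N = \pm E_3$ and $N^\perp = \mathrm{span}\{\partial_x, \partial_y, \partial_t\}$ integrates to the level sets $\{z = c\}$. In case (b), $N = \pm E_4$ and $N^\perp = \mathrm{span}\{\partial_x, \partial_y, \partial_z\}$ integrates to $\{t = c\}$. In case (c), the conditions $E_3(\alpha) = E_4(\alpha) = 0$ translate to $\partial_z \alpha = \partial_t \alpha = 0$, so $\alpha$ depends only on $(x, y)$; the tangent distribution is then the direct sum of $\mathrm{span}\{\partial_z, \partial_t\}$ with an integrable line field in the $xy$-plane, whose integral curves $\gamma = (\gamma_1(u_1), \gamma_2(u_1))$ yield the parametrization $F(u_1, u_2, u_3) = (\gamma_1(u_1), \gamma_2(u_1), u_2, u_3)$.

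The curvature claims in (a) and (b) follow from the Gauss equation combined with the Weingarten formula via \eqref{eq:levicivitaconnection}: in (a) the shape operator $A_{E_3} = 0$ makes $M$ totally geodesic, so $K_{ij} = \tilde K_{ij} = -1$ for each pair in $\mathrm{span}\{E_1, E_2, E_4\}$ by \eqref{eq:sec-curv}; in (b) the shape operator $A_{E_4}$ is diagonal with eigenvalues $1, 1, -2$ on $E_1, E_2, E_3$, and $K_{ij} = \tilde K_{ij} + \lambda_i \lambda_j = 0$ for each pair. These assertions can equivalently be read off from the induced metrics $e^{-2t}(dx^2 + dy^2) + dt^2$ and $e^{-2c}(dx^2 + dy^2) + e^{4c} dz^2$, which are the upper half-space model of $\H^3$ and flat $\R^3$ (after rescaling), respectively.

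For the converse, cases (a) and (b) are immediate: in (a), $M$ is totally geodesic, and in (b) the tangential parts of \eqref{eq:levicivitaconnection} show that $\{E_1, E_2, E_3\}$ is parallel on $\{t = c\}$ and that $A_{E_4}$ has constant entries in this frame, so $\nabla h = 0$ and $M$ is in particular Codazzi. For case (c), I would verify the equivalent Codazzi condition $\tilde g(\tilde R(X, Y) Z, N) = 0$ for all tangent $X, Y, Z$. Since $\tilde g$, $J_\pm$ and $P$ are all invariant under rotations in the $\{E_1, E_2\}$-plane, so is the curvature tensor \eqref{eq:R}; one may therefore assume $\alpha \equiv 0$, reducing the verification to $\tilde g(\tilde R(X, Y) Z, E_1) = 0$ for $X, Y, Z \in \{E_2, E_3, E_4\}$, which follows from the explicit non-zero components of $\tilde R$ listed just above \eqref{eq:sec-curv}. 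The main obstacle is this last step in case (c): $\gamma$ is an \emph{arbitrary} smooth curve, so without the rotational reduction one faces an $\alpha$-dependent check with several coefficient conditions, whereas the symmetry reduces the problem to a handful of evaluations at specific frame vectors.
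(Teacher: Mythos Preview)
Your argument is correct but differs from the paper's in two places worth noting.

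For the forward direction in case~(c), the paper constructs commuting coordinate vector fields $\lambda T_1$, $\mu E_3$, $E_4$ by solving for $\lambda,\mu$ so that the Lie brackets vanish, integrates the resulting first-order system for the components of $F$, and then applies a left translation to reach the stated form. Your foliation argument---the tangent distribution splits as $\mathrm{span}\{\partial_z,\partial_t\}\oplus\mathrm{span}\{T_1\}$, so the projection of $M$ to the $xy$-plane has one-dimensional image $\gamma$---reaches the same conclusion with less computation, though it leaves implicit the step (handled in the paper by the left translation) that allows one to take literally $z=u_2$ and $t=u_3$.

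For the converse in case~(c), the paper computes $h$ directly (only $h(W,W)=e^{u_3}\kappa_\gamma\,N$ is nonzero) and then checks that $\nabla h$ is symmetric. Your route---rewriting the Codazzi condition via the Codazzi equation as $\tilde g(\tilde R(X,Y)Z,N)=0$, then using the rotational invariance of $\tilde R$ in the $\{E_1,E_2\}$-plane to reduce to $N=E_1$ and reading off the vanishing from the listed components of $\tilde R$---is genuinely cleaner and avoids the $\nabla h$ computation entirely. The trade-off is that the paper's explicit formula for $h$ feeds directly into Corollary~\ref{cor:parallel} (parallel $\Leftrightarrow\kappa_\gamma=0$), which your argument does not immediately provide. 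One small caveat: the rotational reduction is pointwise (the angle depends on $p$), so ``assume $\alpha\equiv 0$'' should be read as rotating the frame at each point, not as a global hypothesis on the curve $\gamma$.
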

\begin{proof}
Let $M$ be a connected Codazzi hypersurface and let $U$ be a connected open subset of $M$ on which a unit normal vector field $N$ to $M$ is defined. According to Proposition~\ref{prop:codazzi}, there are three possibilities for $N$, which we will treat separately.

\emph{Case 1: $N = \pm E_3$.} In this case, $E_1$, $E_2$ and $E_4$ span the tangent space to~$U$.
It follows from \eqref{eq:onf} that $U$ is an open subset of $\set{M(x,y,z,t) \in \sol \mid z = c}$ for some $c \in \R$.
Considering the possibilities for $N$ in Proposition~\ref{prop:codazzi} and taking into account that $M$ is connected and smooth, we conclude that $M$ is globally an open subset of $\set{M(x,y,z,t) \in \sol \mid z = c}$.
The equation of Gauss implies that $\tilde g(R(E_1,E_2)E_2,E_1) = \tilde g(R(E_1,E_4)E_4,E_1) = \tilde g(R(E_2,E_4)E_4,E_2) = -1$, showing that $M$ has constant sectional curvature $-1$.

\emph{Case 2: ${N = \pm E_4}$.} Similarly as in Case~1, we obtain that $M$ is an open subset of $\set{M(x,y,z,t) \in \sol \mid t = c}$ for some $c \in \R$ and that this hypersurface has constant sectional curvature $0$.

\emph{Case 3: $N = \cos\alpha \,E_1 + \sin\alpha \,E_2$ for some $\alpha \in C^\infty(U)$ satisfying $E_3(\alpha) = E_4(\alpha) = 0$}.
In this case, $\sin\alpha \, E_1 - \cos\alpha \, E_2$, $E_3$ and $E_4$ span the tangent space to $U$.

In order to obtain the immersion of $U \subseteq M$ into $\sol$ explicitly, we first look for local coordinate vector fields on $U$ of the form
\begin{equation}
\label{eq:codazzi-coordvfs}
\partial_{u_1} = \lambda(\sin\alpha \, E_1 - \cos\alpha \, E_2), \ \ \partial_{u_2} = \mu E_3, \ \  \partial_{u_3} = E_4
\end{equation}
for some local real-valued functions $\lambda$ and $\mu$ on $U$ without any zeroes.
Note that these vector fields are linearly independent by construction and hence they are coordinate vector fields if and only if their Lie brackets vanish.
A direct computation using the Lie brackets~\eqref{eq:Lie_brackets} and the fact that $E_3(\alpha)=E_4(\alpha)=0$ shows
\begin{align*}
& [\partial_{u_1},\partial_{u_2}] = -\frac{\lambda_{u_2}}{\lambda}\partial_{u_1} +\frac{\mu_{u_1}}{\mu}\partial_{u_2}, \\
& [\partial_{u_1},\partial_{u_3}] = -\left( 1+ \frac{\lambda_{u_3}}{\lambda}\right) \partial_{u_1}, \\
& [\partial_{u_2},\partial_{u_3}] = \left( 2- \frac{\mu_{u_3}}{\mu}\right) \partial_{u_2},
\end{align*}
which vanish if and only if $\lambda_{u_2}=0$, $\lambda_{u_3}=-\lambda$, $\mu_{u_1}=0$ and $\mu_{u_3}=2\mu$. We only need one coordinate system and choose
\begin{equation}
\label{eq:fg-codazzi}
\lambda(u_1,u_2,u_3) = e^{-u_3}, \ \ \mu(u_1,u_2,u_3) = e^{2u_3}.
\end{equation}
The condition $E_3(\alpha)=E_4(\alpha)=0$ is equivalent to $\alpha_{u_2}=\alpha_{u_3}=0$, which means that $\alpha$ is a function of $u_1$ only.

Now let $F(u_1,u_2,u_3) \!=\! ( F_1(u_1,u_2,u_3), F_2(u_1,u_2,u_3), F_3(u_1,u_2,u_3), F_4(u_1,u_2,u_3) )$ be a local parametrization of $M$ in $\sol$ using the coordinate system constructed above. From \eqref{eq:codazzi-coordvfs}, in combination with \eqref{eq:onf} and \eqref{eq:fg-codazzi}, we obtain
\begin{equation}
	\label{eq:diffeq-codazzi}
\begin{aligned}
& ((F_1)_{u_1}, (F_2)_{u_1}, (F_3)_{u_1}, (F_4)_{u_1}) =
( e^{F_4-u_3}\sin\alpha(u_1), \, -e^{F_4-u_3}\cos\alpha(u_1), \, 0, \, 0 ), \\
& ((F_1)_{u_2}, (F_2)_{u_2}, (F_3)_{u_2}, (F_4)_{u_2}) =
( 0, \, 0, \, e^{-2(F_4-u_3)} , \, 0 ), \\
& ((F_1)_{u_3}, (F_2)_{u_3}, (F_3)_{u_3}, (F_4)_{u_3}) =
( 0, \, 0, \, 0, \, 1).
\end{aligned}
\end{equation}
From the fourth components of these equations, it follows that
\begin{equation*}
F_4(u_1,u_2,u_3) = u_3 + c_4
\end{equation*}
for some $c_4 \in \R$. After plugging this into the other components of~\eqref{eq:diffeq-codazzi}, we find
\begin{align*}
& F_1(u_1,u_2,u_3) = e^{c_4} \! \int \! \sin\alpha(u_1)\,du_1 + c_1, \\
& F_2(u_1,u_2,u_3) = -e^{c_4} \! \int \! \cos\alpha(u_1)\,du_1 + c_2, \\
& F_3(u_1,u_2,u_3) = e^{-2c_4}u_2 + c_3
\end{align*}
for some $c_1,c_2,c_3 \in \R$.
After a left translation by $(-c_1e^{-c_4},-c_2e^{-c_4},-c_3e^{2c_4},-c_4)$, which is an isometry of $\sol$, we obtain
\begin{equation*}
F(u_1,u_2,u_3) = \left( \int \! \sin\alpha(u_1)\,du_1, \, - \! \int \! \cos\alpha(u_1)\,du_1, \, u_2, \, u_3 \right),
\end{equation*}
see \eqref{eq:groupoperation}.
The parametrization in the statement of the theorem is obtained by noting that the first two components parametrize an arbitrary smooth curve in $\set{M(x,y,z,t) \in \sol \mid z = z_0, \, t = t_0}$, which is a Euclidean plane for any $z_0,t_0 \in \R$ by \eqref{eq:metric}.

Now we prove the converse of the statement.

\emph{Case (a).} In this case the vector fields $E_1$, $E_2$ and $E_4$ form an orthonormal frame to $M$ and $E_3$ is a unit normal vector field.
A straightforward computation using~\eqref{eq:gauss-formula} and the connection~\eqref{eq:levicivitaconnection} shows that the second fundamental form vanishes.
Hence $M$ is totally geodesic and thus also a Codazzi hypersurface.

\emph{Case (b).} In this case the vector fields $E_1$, $E_2$ and $E_3$ form an orthonormal frame to $M$ and $E_4$ is a unit normal vector field.
Using~\eqref{eq:gauss-formula} and~\eqref{eq:levicivitaconnection}, one can compute that the second fundamental form is given by
\begin{equation}
	\label{eq:2ndff-codazzi-1}
	\begin{aligned}
		h(E_1,E_2) &= h(E_1,E_3) = h(E_2,E_3) = 0, \\
		h(E_1,E_1) &= h(E_2,E_2) = E_4, \\
		h(E_3,E_3) &= -2E_4.
	\end{aligned}
\end{equation}
From~\eqref{eq:levicivitaconnection} we see that the induced Levi-Civita connection $\nabla$ on $M$ vanishes and $\tilde\nabla_X E_4$ is tangent to $M$ for $X \in \set{E_1,E_2,E_3}$.
Together with~\eqref{eq:2ndff-codazzi-1} this implies that $\nabla h = 0$, i.e.\ $M$ is parallel and thus also a Codazzi hypersurface.

\emph{Case (c).} Let $F$ be the isometric immersion given in the statement of the theorem.
Then $E_3$, $E_4$ and $W :=
\frac{\gamma_1^\prime E_1 + \gamma_2^\prime E_2}{\sqrt{(\gamma_1^\prime)^2+(\gamma_2^\prime)^2}}$ form a local orthonormal frame for this hypersurface.
A unit normal vector field is given by $N = \frac{\gamma_2^\prime E_1 - \gamma_1^\prime E_2}{\sqrt{(\gamma_1^\prime)^2+(\gamma_2^\prime)^2}}$.
From~\eqref{eq:metric} and~\eqref{eq:levicivitaconnection} it follows that the second fundamental form associated to the immersion is given by
\begin{equation}
	\label{eq:2ndff-codazzi-2}
	\begin{aligned}
		&h(W,W) =
		\frac{e^{w}(\gamma_1^{\prime\prime}\gamma_2^\prime - \gamma_1^{\prime}\gamma_2^{\prime\prime})}{((\gamma_1^\prime)^2+(\gamma_2^\prime)^2)^{3/2}} \; N = e^{w}\kappa_\gamma \, N, \\
		&h(E_3,E_3) = h(E_3,E_4) = h(E_3,W) = h(E_4,E_4) = h(E_4,W) = 0,
	\end{aligned}
\end{equation}
where $\kappa_\gamma$ is the curvature of $\gamma=(\gamma_1,\gamma_2)$ in the Euclidean plane.
Using~\eqref{eq:gauss-formula}, \eqref{eq:weingarten-formula} and the connection~\eqref{eq:levicivitaconnection} it is now a straightforward computation to show that $\nabla h$ is symmetric.
Hence the immersion $F$ is Codazzi.
\end{proof}

The class of Codazzi hypersurfaces includes all parallel hypersurfaces and, in particular, the totally geodesic ones.
\begin{corollary}
\label{cor:parallel}
Let $M$ be a connected parallel hypersurface of $\sol$. Then one of the following holds.
\begin{enumerate}[(a),leftmargin=*]
\item $M$ is an open subset of $\set{M(x,y,z,t) \in \sol \mid z = c}$ for some $c \in \R$ and has constant sectional curvature $-1$.
\item $M$ is an open subset of $\set{M(x,y,z,t) \in \sol \mid t = c}$ for some $c \in \R$ and has constant sectional curvature $0$.
\item $M$ is an open subset of $\set{M(x,y,z,t) \in \sol \mid a x + b y = c}$ for some $a,b,c \in \R$ and does not have constant sectional curvature.
\end{enumerate}

Conversely, all of the hypersurfaces listed above are parallel.
\end{corollary}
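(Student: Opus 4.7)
The plan is to apply Theorem~\ref{thm:codazzi}: since every parallel hypersurface is a Codazzi hypersurface, any connected parallel hypersurface $M$ of $\sol$ must, up to open subsets, fall into one of the three families (a), (b), (c) of that theorem. I would then examine each of these three cases and determine which sub-family actually satisfies $\nabla h = 0$.

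Cases (a) and (b) of Theorem~\ref{thm:codazzi} need no additional work: its proof already establishes $h\equiv 0$ in case (a) (totally geodesic, hence parallel) and $\nabla h = 0$ in case (b). These yield (a) and (b) of the corollary. The sectional curvatures are then read off from the Gauss equation together with~\eqref{eq:sec-curv}: in (a) the curvatures coincide with $\tilde K_{12}=\tilde K_{14}=\tilde K_{24}=-1$; in (b) the shape operator from~\eqref{eq:2ndff-codazzi-1} combined with $\tilde K_{12}=-1$, $\tilde K_{13}=\tilde K_{23}=2$ yields all three induced sectional curvatures equal to $0$.

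The content of the corollary lies in case (c) of Theorem~\ref{thm:codazzi}: $F(u_1,u_2,u_3)=(\gamma_1(u_1),\gamma_2(u_1),u_2,u_3)$ with $\gamma$ an arbitrary plane curve. By~\eqref{eq:2ndff-codazzi-2}, in the orthonormal frame $\{W,E_3,E_4\}$ the only nonzero component of $h$ is $h(W,W)=e^{w}\kappa_\gamma\,N$. Since the normal bundle has rank one, $\nabla^\perp N=0$, and $\nabla h=0$ is equivalent to $\nabla h^N=0$ for the scalar tensor $h^N=\tilde g(h,N)$. The decisive test is $X=E_4$, $Y=Z=W$: because $\gamma$ and therefore $W$ depends only on $u_1$, and $\tilde\nabla_{E_4}E_1=\tilde\nabla_{E_4}E_2=0$ by~\eqref{eq:levicivitaconnection}, we have $\nabla_{E_4}W=0$, so
\[
(\nabla_{E_4}h^N)(W,W)=E_4\!\left(e^{w}\kappa_\gamma\right)=e^{w}\kappa_\gamma.
\]
Vanishing of this expression forces $\kappa_\gamma\equiv 0$, i.e.\ $\gamma$ is an open arc of a straight line, and $M$ is an open subset of $\{ax+by=c\}$ for some $a,b,c\in\R$ with $(a,b)\ne(0,0)$.

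For the converse of (c), $\kappa_\gamma\equiv 0$ makes $h\equiv 0$ by~\eqref{eq:2ndff-codazzi-2}, so $M$ is totally geodesic and hence parallel. To see that $M$ does not have constant sectional curvature, I would take the orthonormal tangent frame $\{a_1 E_1+a_2 E_2,\,E_3,\,E_4\}$, with $(a_1,a_2)$ a unit vector orthogonal to $(a,b)$, and apply the Gauss equation with $h=0$ together with~\eqref{eq:sec-curv}; the three independent sectional curvatures come out to $-1$, $2$, $-4$, which are pairwise distinct. The main obstacle, though essentially a focused calculation, is the evaluation of $(\nabla_{E_4}h^N)(W,W)$ in case (c); every other step is bookkeeping on top of Theorem~\ref{thm:codazzi} and its proof.
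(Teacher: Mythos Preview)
Your proposal is correct and follows essentially the same route as the paper: reduce to the three families of Theorem~\ref{thm:codazzi}, note that (a) and (b) are already handled in that proof, and in case~(c) show that $\nabla h=0$ forces $\kappa_\gamma\equiv 0$. The paper simply asserts this last implication, whereas you supply the concrete test $(\nabla_{E_4}h)(W,W)=e^{u_3}\kappa_\gamma\,N$, which is exactly the right computation (here $w=u_3=t$ in the notation of~\eqref{eq:2ndff-codazzi-2}); your verification that $\nabla_{E_4}W=0$ via $\tilde\nabla_{E_4}E_1=\tilde\nabla_{E_4}E_2=0$ and the $u_1$-dependence of $\gamma$ is clean, and the sectional-curvature check $(-1,2,-4)$ matches the paper's conclusion.
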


\begin{proof}
Cases (a) and (b) follow directly from the proof of Theorem~\ref{thm:codazzi}.
We obtain case (c) as a special case of (c) in Theorem~\ref{thm:codazzi} by noticing that $\nabla h$ is symmetric if and only if the curvature $\kappa_\gamma$ of $\gamma$ vanishes.
In that case, $\gamma$ is a straight line and $h$ vanishes identically by~\eqref{eq:2ndff-codazzi-2}.
One computes the sectional curvatures from~\eqref{eq:sec-curv}, using that $E_3$, $E_4$ and $bE_1-aE_2$ form a tangent frame for this hypersurface.
\end{proof}

\begin{corollary}
\label{cor:tot-geod}
Let $M$ be a connected totally geodesic hypersurface of $\sol$. Then one of the following holds.
\begin{enumerate}[(a),leftmargin=*]
\item $M$ is an open subset of $\set{M(x,y,z,t) \in \sol \mid z = c}$ for some $c \in \R$ and has constant sectional curvature $-1$.
\item $M$ is an open subset of $\set{M(x,y,z,t) \in \sol \mid a x + b y = c}$ for some $a,b,c \in \R$ and does not have constant sectional curvature.
\end{enumerate}

Conversely, all of the hypersurfaces listed above are totally geodesic.
\end{corollary}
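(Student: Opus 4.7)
Since every totally geodesic hypersurface is parallel, the plan is to take the classification of parallel hypersurfaces from Corollary~\ref{cor:parallel} and simply inspect the second fundamental form in each of its three cases, keeping those for which $h \equiv 0$.

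The first step is to invoke Corollary~\ref{cor:parallel}, which leaves three candidate families: the slices $\{z=c\}$, the slices $\{t=c\}$, and the ``vertical planes'' $\{ax+by=c\}$. For the slices $\{z=c\}$, I would point to Case~(a) of the converse portion of the proof of Theorem~\ref{thm:codazzi}, where a direct computation with \eqref{eq:gauss-formula} and \eqref{eq:levicivitaconnection} already shows $h=0$; so these hypersurfaces appear in the statement. For the vertical planes $\{ax+by=c\}$, I would observe that they arise in Case~(c) of Corollary~\ref{cor:parallel} precisely when the profile curve $\gamma$ is a straight line, so that $\kappa_\gamma\equiv 0$ and \eqref{eq:2ndff-codazzi-2} yields $h\equiv 0$; hence these are totally geodesic as well.

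The only remaining task is to rule out the slices $\{t=c\}$. For this I would just read off the second fundamental form from \eqref{eq:2ndff-codazzi-1}: one has $h(E_1,E_1)=E_4\neq 0$, so these hypersurfaces are parallel but not totally geodesic, and must be excluded from the statement.

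There is essentially no obstacle here, as the result is a clean corollary of Corollary~\ref{cor:parallel} combined with the explicit second fundamental forms \eqref{eq:2ndff-codazzi-1} and \eqref{eq:2ndff-codazzi-2} already computed in the proof of Theorem~\ref{thm:codazzi}. The converse is then immediate in both remaining cases, since $h=0$ has been verified directly in the course of those earlier computations. The only minor point to be careful about is connectedness: once we know locally that the unit normal is $\pm E_3$ (Case~(a)) or of the form $\cos\alpha\,E_1+\sin\alpha\,E_2$ with $\alpha$ constant (vertical plane case), the smoothness and connectedness of $M$ force the global description given in the statement, exactly as in the proof of Theorem~\ref{thm:codazzi}.
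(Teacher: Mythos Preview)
Your proposal is correct and follows essentially the same approach as the paper, which simply states that the result ``follows from the proof of Corollary~\ref{cor:parallel}.'' You have spelled out exactly what that one-line proof means: restrict the three parallel families to those with $h\equiv 0$ using the second fundamental forms already computed in \eqref{eq:2ndff-codazzi-1} and \eqref{eq:2ndff-codazzi-2}, which eliminates the slices $\{t=c\}$ and retains the other two.
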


\begin{proof}
This follows from the proof of Corollary~\ref{cor:parallel}.
\end{proof}

\begin{remark}
	\normalfont
For each of the types of hypersurfaces listed in Corollary~\ref{cor:parallel} and Corollary~\ref{cor:tot-geod}, all hypersurfaces of that type are congruent via isometries of $\sol$.
\end{remark}

\begin{remark}
	{\normalfont
The totally geodesic hypersurfaces in Corollary~\ref{cor:tot-geod} are factors of the warped product representations of $\sol$ mentioned in~\cite{j-traj}, and this agrees with Theorem~1 in~\cite{nikolayevsky}.
Namely, we have}
\begin{align*}
	\sol &\cong \H^3(-1) \times_{e^{2t}} \R \cong \set{M(x,y,z,t) \in \sol \mid z = c} \times_{e^{2t}} \R, \\
	\sol &\cong \H^2(-4) \times_{e^{-t}} \R^2 \cong (\H^2(-4) \times_{e^{-t}} \R) \times_{e^{-t}} \R \\
	&\cong \set{M(x,y,z,t) \in \sol \mid ax + by = c} \times_{e^{-t}} \R. \qedhere
\end{align*}
\end{remark}

\section{Classification of totally umbilical hypersurfaces of $\sol$}
\label{sec:tot-umb}

In this section, we give a complete classification of totally umbilical hypersurfaces of $\sol$.
As in the previous section for Codazzi hypersurfaces, we start by listing the possible unit normal vector fields to such a hypersurface.

\begin{proposition}
\label{prop:codazzi-tot-umb}
Let $M$ be a totally umbilical hypersurface of $\sol$ with local unit normal vector field $N$. Let $H$ be the mean curvature vector field and $\lambda = \tilde g(H,N)$.
Then one of the following holds.
\begin{enumerate}[(a),leftmargin=*]
\item
$\lambda = 0$ and $N = \cos\alpha \, E_1 + \sin\alpha \, E_2$ for some $\alpha \in \R$.
\item
$\lambda = \sin\beta$ and $N = \cos\beta \, E_3 + \sin\beta \, E_4$ for some local smooth function $\beta$ on $M$ such that $\cos\beta$ is nowhere vanishing and
\[
E_1(\beta) = E_2(\beta)  = 0, \ \  (\sin\beta\,E_3-\cos\beta\,E_4)(\beta) = 3\sin\beta.
\]
\end{enumerate}

Conversely, there exist hypersurfaces of $\sol$ admitting a local unit normal vector field of any of the forms above.
\end{proposition}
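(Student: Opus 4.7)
The plan is to mirror the strategy of Proposition~\ref{prop:codazzi} but exploit the stronger umbilical condition $A_N=\lambda\,\Id$ (equivalently $h(X,Y)=\lambda\,g(X,Y)\,N$) to pin down both the direction of $N$ and the value of $\lambda$. First I expand $N=aE_1+bE_2+cE_3+dE_4$ with $a^2+b^2+c^2+d^2=1$, reuse the orthonormal tangent frame $\{T_1,T_2,T_3\}$ of~\eqref{eq:tangent-frame}, and set $a=r\cos\alpha$, $b=r\sin\alpha$, $c=\sqrt{1-r^2}\cos\beta$, $d=\sqrt{1-r^2}\sin\beta$. Since $N$ is a unit normal, $\nabla^\perp N=0$, so the umbilical hypothesis gives $(\nabla h)(X,Y,Z)=X(\lambda)\,g(Y,Z)\,N$, and the Codazzi equation becomes $\tilde g(\tilde R(X,Y)Z, N)=X(\lambda)\,g(Y,Z)-Y(\lambda)\,g(X,Z)$. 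Plugging in $X=T_2$, $Y=T_3$, $Z=T_1$ kills the right-hand side by orthonormality, while the left-hand side reduces to the same equation $6r^2(1-r^2)=0$ extracted in the proof of Proposition~\ref{prop:codazzi}; hence $r\in\{0,1\}$.

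If $r=1$, then $N=\cos\alpha\,E_1+\sin\alpha\,E_2$ with tangent frame $\{T_1=\sin\alpha\,E_1-\cos\alpha\,E_2,\,E_3,\,E_4\}$; using~\eqref{eq:levicivitaconnection} a direct calculation yields $\tilde\nabla_{E_3}N=-E_3(\alpha)T_1$, $\tilde\nabla_{E_4}N=-E_4(\alpha)T_1$, and $\tilde\nabla_{T_1}N=-T_1(\alpha)T_1$. Imposing $A_NX=-\tilde\nabla_XN=\lambda X$ on $E_3$ and $E_4$ and using the linear independence of $T_1,E_3,E_4$ forces $\lambda=0$ and $E_3(\alpha)=E_4(\alpha)=0$; the $T_1$ equation then gives $T_1(\alpha)=0$, so $\alpha$ is locally (hence globally, by connectedness) constant, producing case~(a). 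If $r=0$, then $N=\cos\beta\,E_3+\sin\beta\,E_4$ with tangent frame $\{E_1,\,E_2,\,T_1=\sin\beta\,E_3-\cos\beta\,E_4\}$, and the parallel computation yields $\tilde\nabla_{E_i}N=-\sin\beta\,E_i-E_i(\beta)T_1$ for $i=1,2$ and $\tilde\nabla_{T_1}N=-(T_1(\beta)-2\sin\beta)T_1$. Imposing $A_N=\lambda\,\Id$ then extracts $\lambda=\sin\beta$, $E_1(\beta)=E_2(\beta)=0$ and $T_1(\beta)=3\sin\beta$. The restriction that $\cos\beta$ be nowhere vanishing reflects the fact that $\cos\beta\equiv 0$ on any open set would force $\beta\equiv\pm\pi/2$ and hence $T_1(\beta)=0$, contradicting $T_1(\beta)=\pm3$.

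For the converse, case~(a) is realised by the totally geodesic hypersurfaces $\{ax+by=c\}$ from Corollary~\ref{cor:tot-geod}(b) (with $\alpha$ encoded by $a,b$), and the instance $\beta\equiv 0$ of~(b) is realised by $\{z=c\}$ of Corollary~\ref{cor:tot-geod}(a); for non-constant $\beta$, the overdetermined system $E_1(\beta)=E_2(\beta)=0$, $T_1(\beta)=3\sin\beta$ admits local solutions with $\beta$ depending only on $t$, reducing to the ODE $\cos\beta\,\beta'=-3\sin\beta$ and producing genuinely non-geodesic umbilical examples. The main obstacle I anticipate is the algebraic bookkeeping in the $r=0$ case: one must carefully collect the pieces of $\tilde\nabla_XN$ along the tangent directions $E_1,E_2,T_1$ and the normal $N$, using the identities $-\sin\beta\,E_3+\cos\beta\,E_4=-T_1$ and $\tilde g(T_1,N)=0$, so that the matrix equation $A_N=\lambda\,\Id$ decouples cleanly into the three scalar conditions stated.
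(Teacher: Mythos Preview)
Your argument is correct and, after the common first reduction to $r\in\{0,1\}$ via the Codazzi equation with $X=T_2$, $Y=T_3$, $Z=T_1$, it takes a somewhat more direct route than the paper. The paper continues to exploit the Codazzi identity $\tilde g(\tilde R(X,Y)Z,N)=g(Y,Z)X(\lambda)-g(X,Z)Y(\lambda)$ for further choices of $X,Y,Z$ to first pin down the derivatives of $\lambda$, and only afterwards imposes the diagonal umbilical conditions $h(T_1,T_1)=\lambda N$, $h(E_i,E_i)=\lambda N$ to identify $\lambda$ and the derivatives of $\alpha$ or $\beta$. You instead bypass curvature entirely at this stage: you compute the full shape operator $A_N=-\tilde\nabla N$ on the tangent frame and read off all scalar conditions at once from $A_N=\lambda\,\Id$. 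This is cleaner and shows that the Codazzi equation is in fact only needed for the initial dichotomy $r\in\{0,1\}$, not for the subsequent analysis; the paper's extra Codazzi computations are consistent with, but redundant to, the Weingarten data you extract.

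One point deserves tightening: your justification that $\cos\beta$ is nowhere vanishing only rules out $\cos\beta\equiv 0$ on an open set, whereas the proposition asserts a pointwise condition. Your open-set argument (constant $\beta$ forces $T_1(\beta)=0\neq\pm 3$) does not exclude an isolated zero of $\cos\beta$, and indeed your derived relations $\lambda=\sin\beta$, $T_1(\beta)=3\sin\beta$ remain algebraically consistent at such a point. The paper addresses this pointwise, arguing that if $N(p)=\pm E_4(p)$ then $\{E_1,E_2,E_3\}$ spans $T_pM$ and, from \eqref{eq:levicivitaconnection}, $h(E_1,E_1)(p)=E_4(p)$ while $h(E_3,E_3)(p)=-2E_4(p)$, so $p$ is not umbilical. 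If you wish to match the proposition as stated, you should replace your open-set remark with a pointwise argument of this type.
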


\begin{proof}
Let $M$ be a totally umbilical hypersurface of $\sol$.
We define its unit normal vector field $N$ and tangent vector fields $T_1$, $T_2$ and $T_3$ as in the proof of Proposition~\ref{prop:codazzi}.
The Codazzi equation for totally umbilical hypersurfaces is
\begin{equation}
\label{eq:codazzi-tu}
\tilde g(\tilde R(X,Y)Z, \, N) = g(Y,Z)\, X(\lambda) - g(X,Z)\, Y(\lambda)
\end{equation}
for $X,Y,Z \in \mathfrak X (M)$.
However, when taking $X=T_2$, $Y=T_3$ and $Z=T_1$ the equation reduces again to $\tilde g(\tilde R(T_2,T_3)T_1),\, N) = 0$, which is equivalent to $6r^2(1-r^2) = 0$.
Since $r$ is smooth, it follows that $r$ is locally constant, namely either $r = 1$ or $r = 0$.

\emph{Case 1: $r=1$.} In this case, $N = \cos\alpha\, E_1 + \sin\alpha\, E_2$ and the orthogonal complement of $N$ is spanned by $E_3$, $E_4$ and $T_1=\sin\alpha \, E_1 - \cos\alpha \, E_2$.
This distribution is integrable if and only if $E_3(\alpha) = E_4(\alpha) = 0 $.
To proceed, we consider the Codazzi equation~\eqref{eq:codazzi-tu} for the tangent vector fields $T_1, E_3$ and $E_4$:
\begin{alignat*}{2}
&\tilde g(\tilde R(T_1,E_3)E_3,N)
&&= T_1(\lambda), \\
&\tilde g(\tilde R(E_3,E_4)E_4,N)
&&= E_3(\lambda), \\
&\tilde g(\tilde R(E_4,E_3)E_3,N)
&&= E_4(\lambda).
\end{alignat*}
A straightforward computation shows that the left-hand sides of these equations are zero.
Since $\set{T_1,E_3,E_4}$ is a frame on $M$, this implies that $\lambda$ is constant.

Because $M$ is totally umbilical, we have $h(X,X) = \lambda N$ for any $X \in \set{T_1,E_3,E_4}$.
From this condition we obtain
$\lambda N = T_1(\alpha)\,N$ and $\lambda N = 0$.
Therefore $\lambda = 0$ and $T_1(\alpha) = 0$.
Together with $E_3(\alpha) = E_4(\alpha) = 0 $ this implies that $\alpha$ is constant.

\emph{Case 2: $r=0$.} In this case, $N = \cos\beta\,E_3+\sin\beta\,E_4$ and the orthogonal complement of $N$ is spanned by $E_1$, $E_2$ and $T_1= \sin\beta\,E_3-\cos\beta\,E_4$.
This distribution is integrable if and only if $E_1(\beta) = E_2(\beta) = 0$.

Note that $\beta$ has to be such that $\cos\beta$ is never zero. That is because if $\cos\beta(p) = 0$ at some point $p \in M$, that point cannot be umbilical. Indeed, we have $N(p) = E_4(p)$ and $\set{E_1(p),E_2(p),E_3(p)}$ is an orthonormal basis for $T_pM$. From \eqref{eq:levicivitaconnection}, one obtains
$h(E_1(p),E_1(p)) = E_4(p)$, $h(E_2(p),E_2(p)) = E_4(p)$ and  $h(E_3(p),E_3(p)) = -2E_4(p)$.

We consider again the Codazzi equation~\eqref{eq:codazzi-tu} for some vector fields:
\begin{alignat*}{1}
&\tilde g(\tilde R(T_1,E_1)E_1,N)
= T_1(\lambda), \\
&\tilde g(\tilde R(E_1,E_2)E_2,N)
= E_1(\lambda), \\
&\tilde g(\tilde R(E_2,E_1)E_1,N)
= -E_2(\lambda).
\end{alignat*}
A straightforward computation shows that these equations are equivalent to
\begin{equation}
\label{eq:Tlambda}
T_1(\lambda) = 3\cos\beta\sin\beta, \ \ E_1(\lambda) = 0, \ \ E_2(\lambda) = 0.
\end{equation}
Since $M$ is totally umbilical, we again have $h(X,X) = \lambda N$ for any $X \in \set{T_1,E_1,E_2}$ and obtain
$\lambda N = (T_1(\beta) -2\sin\beta) N$ and $\lambda N = \sin\beta \, N$.
Hence $\lambda = \sin\beta$.
Combining this with \eqref{eq:Tlambda} we find $3\cos\beta\sin\beta = \cos\beta \,T_1(\beta)$.
From $\cos\beta \ne 0$ it follows that $T_1(\beta) = 3\sin\beta$.

For the converse, it suffices to check the integrability of the orthogonal complements of $N$ in both cases.
\end{proof}

\begin{theorem}
\label{thm:totally-umbilical}
Let $M$ be a connected totally umbilical hypersurface of $\sol$. Then one of the following holds.
\begin{enumerate}[(a),leftmargin=*]
	\item $M$ is an open subset of $\set{M(x,y,z,t) \in\sol \mid x\cos\alpha + y \sin\alpha = c}$ for some $c,\alpha \in \R$ and does not have constant sectional curvature.
	\item There exist local coordinates $(u_1,u_2,u_3)$ on $M$ such that the immersion of $M$ into $\sol$ is given by $F(u_1,u_2,u_3) =( u_1,u_2,\gamma_1(u_3),\gamma_2(u_3) )$, where $\gamma=(\gamma_1,\gamma_2)$ is any smooth curve in the $zt$-plane satisfying
	\begin{equation}
		\label{eq:diffeq-totumb}
		\gamma_1^{\prime\prime}\gamma_2^\prime - \gamma_1^\prime\gamma_2^{\prime\prime} + 5 \gamma_1^\prime (\gamma_2^\prime)^2 + 3 e^{4\gamma_2}(\gamma_1^\prime)^3 = 0.
	\end{equation}
\end{enumerate}
Conversely, all of the hypersurfaces listed above are totally umbilical.
\end{theorem}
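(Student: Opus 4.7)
The plan is to build directly on Proposition~\ref{prop:codazzi-tot-umb}, which has already reduced the possible unit normal vector fields $N$ of a totally umbilical hypersurface $M$ of $\sol$ to two cases. For each case I would integrate the tangent distribution to obtain $M$ in explicit form, and then verify the converse by computing the shape operator.

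In Case (a) of the proposition, $\lambda = 0$ and $N = \cos\alpha\,E_1 + \sin\alpha\,E_2$ with $\alpha \in \R$ constant. The vanishing of $\lambda$ forces $H = \lambda N = 0$, and umbilicality then gives $h = 0$, so $M$ is totally geodesic. Corollary~\ref{cor:tot-geod}(b) applies directly and identifies $M$ as an open subset of $\set{M(x,y,z,t)\in\sol\mid x\cos\alpha + y\sin\alpha = c}$. To show that $M$ is not of constant sectional curvature, I use total geodesy together with \eqref{eq:sec-curv} and the orthonormal tangent frame $\set{E_3, E_4, -\sin\alpha\,E_1 + \cos\alpha\,E_2}$: the sectional curvatures of the three tangent $2$-planes come out to $-4$, $-1$, and $2$, which are distinct.

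For Case (b) of the proposition, $N = \cos\beta\,E_3 + \sin\beta\,E_4$, with $E_1(\beta) = E_2(\beta) = 0$ and $T_1(\beta) = 3\sin\beta$ for $T_1 = \sin\beta\,E_3 - \cos\beta\,E_4$. Since the tangent distribution contains $E_1$ and $E_2$, I expect $M$ to be parametrized by $F(u_1, u_2, u_3) = (u_1, u_2, \gamma_1(u_3), \gamma_2(u_3))$ for a curve $\gamma$ in the $zt$-plane. Then $F_{u_3} = \gamma_1' e^{2\gamma_2}E_3 + \gamma_2' E_4$, and the tangency condition $F_{u_3} \perp N$ yields $\cos\beta = -\gamma_2'/\rho$ and $\sin\beta = \gamma_1' e^{2\gamma_2}/\rho$ where $\rho = \sqrt{(\gamma_2')^2 + (\gamma_1')^2 e^{4\gamma_2}}$; in particular $F_{u_3} = \rho\,T_1$. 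Since $E_1(\beta) = E_2(\beta) = 0$ means $\beta$ depends only on $u_3$, the condition $T_1(\beta) = 3\sin\beta$ reduces to $\beta'(u_3) = 3\gamma_1' e^{2\gamma_2}$. Differentiating $\tan\beta = -\gamma_1' e^{2\gamma_2}/\gamma_2'$, using $\sec^2\beta = \rho^2/(\gamma_2')^2$, and equating the result with $3\gamma_1' e^{2\gamma_2}$ should produce precisely the ODE~\eqref{eq:diffeq-totumb} after clearing denominators.

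For the converse in Case (b), I would verify directly that any $F$ of the above form with $\gamma$ satisfying~\eqref{eq:diffeq-totumb} is totally umbilical. This amounts to computing the shape operator $A_N$ in the orthonormal tangent frame $\set{e^{-\gamma_2}E_1, e^{-\gamma_2}E_2, T_1}$ via the Weingarten formula~\eqref{eq:weingarten-formula} together with the Levi-Civita data~\eqref{eq:levicivitaconnection}, and checking that $A_N = \sin\beta\,\Id$. The main obstacle I expect is this last step: besides recovering the correct diagonal eigenvalue in the $T_1$-direction through~\eqref{eq:diffeq-totumb}, one must also verify that the off-diagonal entries of $A_N$ vanish and that the eigenvalues along $E_1$ and $E_2$ coincide with $\sin\beta$. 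These checks ultimately rely on the fact that $\beta$ is a function of $u_3$ alone, and careful bookkeeping of the $e^{2\gamma_2}$ factors is needed throughout the reduction.
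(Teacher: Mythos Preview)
Your proposal is correct and follows essentially the same approach as the paper, relying on Proposition~\ref{prop:codazzi-tot-umb} and treating the two normal cases separately. In Case~2 the paper is slightly more explicit: it builds the coordinate system $\partial_{u_1}=\lambda E_1$, $\partial_{u_2}=\mu E_2$, $\partial_{u_3}=T_1$ by solving for the integrating factors $\lambda,\mu$, integrates the resulting first-order system for the components of $F$, and applies a left translation to normalize the constants, whereas you posit the product form $(u_1,u_2,\gamma_1(u_3),\gamma_2(u_3))$ directly and extract the ODE by differentiating $\tan\beta=-\gamma_1'e^{2\gamma_2}/\gamma_2'$. Your route is shorter and equally valid, but the step ``since the tangent distribution contains $E_1$ and $E_2$, I expect $M$ to be parametrized by \ldots'' deserves one sentence of justification (the commuting tangent fields $\partial_x,\partial_y$ foliate $M$ by $xy$-planes, so $M$ is locally the preimage of a curve in the $zt$-plane). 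One small slip in the converse: $\{e^{-\gamma_2}E_1,\,e^{-\gamma_2}E_2,\,T_1\}$ is not orthonormal, since $E_1,E_2$ are already unit vectors for $\tilde g$; what you have written are the coordinate vectors $F_{u_1},F_{u_2}$, and the correct orthonormal tangent frame is simply $\{E_1,E_2,T_1\}$.
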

\begin{proof}
	Let $M$ be a connected totally umbilical hypersurface and let $U$ be a connected open subset of $M$ on which a unit normal vector field $N$ to $M$ is defined.
	According to Proposition~\ref{prop:codazzi-tot-umb}, there are two possibilities for $N$, which we will treat separately.

	\emph{Case 1: $N = \cos\alpha\,E_1 + \sin\alpha\,E_2$ for some $\alpha \in \R$}.
	We know from Section~\ref{sec:codazzi} that $M$ is totally geodesic in this case and $N$ can be defined globally.
	Then $M$ is an open subset of $\set{M(x,y,z,t) \in \sol \mid a x + by = c}$ for some $a,b,c \in \R$.
	The sectional curvature is computed in the proof of Corollary~\ref{cor:parallel} and is not constant.

	\emph{Case 2: $N = \cos\beta \,E_3 + \sin\beta \,E_4$ for some $\beta \in C^\infty(U)$ satisfying ${E_1(\beta) = E_2(\beta) = 0}$, $(\sin\beta\, E_3 - \cos\beta\, E_4)(\beta) = 3\sin\beta$ and $\cos\beta \ne 0$.}
	In this case, $\sin\beta \, E_3 - \cos\beta \, E_4$, $E_1$ and $E_2$ span the tangent space to $U$.

	In order to obtain the immersion of $U \subseteq M$ into $\sol$ explicitly, we first look for local coordinate vector fields on $U$ of the form
	\begin{equation}
	\label{eq:totumb-coordvfs}
	\partial_{u_1} = \lambda E_1, \ \ \partial_{u_2} = \mu E_2, \ \  \partial_{u_3} = \sin\beta\, E_3 - \cos\beta\, E_4
	\end{equation}
	for some local real-valued functions $\lambda$ and $\mu$ on $U$ without any zeroes.
	Note that these vector fields are linearly independent by construction and hence they are coordinate vector fields if and only if their Lie brackets vanish.
	A direct computation using~\eqref{eq:gauss-formula} and the fact that $E_1(\beta)=E_2(\beta)=0$ shows
	\begin{align*}
	& [\partial_{u_1},\partial_{u_2}] = -\frac{\lambda_{u_2}}{\lambda}\partial_{u_1} +\frac{\mu_{u_1}}{\mu}\partial_{u_2}, \\
	& [\partial_{u_1},\partial_{u_3}] = \left( \cos\beta - \frac{\lambda_{u_3}}{\lambda}\right) \partial_{u_1}, \\
	& [\partial_{u_2},\partial_{u_3}] = \left( \cos\beta - \frac{\mu_{u_3}}{\mu}\right) \partial_{u_2},
	\end{align*}
	which vanish if and only if $\lambda_{u_2}=0$, $\lambda_{u_3}=\lambda\cos\beta$, $\mu_{u_1}=0$ and $\mu_{u_3}=\mu\cos\beta$.
	We only need one coordinate system and choose
	\begin{equation}
	\label{eq:fg-totumb}
	\lambda(u_1,u_2,u_3) = \mu(u_1,u_2,u_3) = \exp\left( \int \cos\beta(u_3)\, du_3 \right).
	\end{equation}
	Notice that the condition $E_1(\beta)=E_2(\beta)=0$ is equivalent to $\beta_{u_1}=\beta_{u_2}=0$, which means that $\beta$ is a function of $u_3$ only.
	From $(\sin\beta\, E_3 - \cos\beta-, E_4)(\beta) = 3 \sin\beta$ it follows that $\beta_{u_3}=3\sin\beta$.

	Now let $F(u_1,u_2,u_3) \!=\! ( F_1(u_1,u_2,u_3), F_2(u_1,u_2,u_3), F_3(u_1,u_2,u_3), F_4(u_1,u_2,u_3) )$ be a local parametrization of $M$ in $\sol$ using the coordinate system constructed above.
	From \eqref{eq:totumb-coordvfs}, in combination with \eqref{eq:onf} and \eqref{eq:fg-totumb}, we obtain
	\begin{equation}
		\label{eq:F-totumb}
		\begin{aligned}
			& ((F_1)_{u_1}, (F_2)_{u_1}, (F_3)_{u_1}, (F_4)_{u_1}) =
			( e^{F_4}\lambda(u_3), \, 0, \, 0, \, 0 ), \\
			& ((F_1)_{u_2}, (F_2)_{u_2}, (F_3)_{u_2}, (F_4)_{u_2}) =
			( 0, \, e^{F_4}\lambda(u_3), \, 0 , \, 0 ), \\
			& ((F_1)_{u_3}, (F_2)_{u_3}, (F_3)_{u_3}, (F_4)_{u_3}) =
			( 0, \, 0, \, e^{-2 F_4}\sin\beta(u_3), \, -\cos\beta(u_3)).
		\end{aligned}
	\end{equation}
	From the third and fourth components of these equations, it follows that
	\begin{equation*}
		\begin{aligned}
			&F_4(u_1,u_2,u_3) = -\int\cos\beta(u_3)\, du_3 + c_4 \\
			&F_3(u_1,u_2,u_3) = \int e^{-2F_4(u_3)}\sin\beta(u_3)\, du_3 + c_3
		\end{aligned}
	\end{equation*}
	for some $c_4,c_3 \in \R$.
	After plugging this into the first two components of~\eqref{eq:F-totumb}, using \eqref{eq:fg-totumb} we~find
	\begin{align*}
	& F_1(u_1,u_2,u_3) = e^{c_4} u_1 + c_1, \\
	& F_2(u_1,u_2,u_3) = e^{c_4} u_2 + c_2
	\end{align*}
	for some $c_1,c_2 \in \R$.
	Now we apply to the components of $F$ a left translation by $(-c_1e^{-c_4},-c_2e^{-c_4},-c_3e^{2c_4},-c_4)$ (see~\eqref{eq:groupoperation}), which is an isometry of $\sol$.
	The resulting parametrization is
	\[
	\left( u_1,u_2,
	\int \exp\left(2\int\cos\beta(u_3)\, du_3\right) \sin\beta(u_3)\, du_3,
	-\int\cos\beta(u_3)\, du_3 \right).
	\]
	Denote by $\gamma_1(u_3)$ and $\gamma_2(u_3)$ the third and fourth component respectively.
	Using $\beta'=3\sin\beta$,
	 we find $\gamma_1^\prime = e^{-2\gamma_2}\sin\beta$, $\gamma_1^{\prime\prime}=5e^{-2\gamma_2}\cos\beta\sin\beta$ and $\gamma_2^\prime = -\cos\beta$, $\gamma_2^{\prime\prime}=3\sin^2\beta$.
   Then it is straightforward to check that the differential equation~\eqref{eq:diffeq-totumb} is satisfied.

	Now we prove the converse statement.

	\emph{Case~(a).}
	This follows from Corollary~\ref{cor:tot-geod} where we show that a hypersurface of this type is totally geodesic, hence totally umbilical.

	\emph{Case~(b).}
	Let $F$ be the isometric immersion given in the statement of the theorem.
	Then $E_1,E_2$ and $W := \frac{\gamma_1^\prime e^{2\gamma_2}E_3 + \gamma_2^\prime E_4}{\sqrt{(\gamma_1^\prime)^2e^{4\gamma_2}+(\gamma_2^\prime)^2}}$ form a local orthonormal frame for this hypersurface.
	A unit normal vector field is given by $N = \frac{\gamma_2^\prime E_3 - \gamma_1^\prime e^{2\gamma_2} E_4}{\sqrt{(\gamma_1^\prime)^2e^{4\gamma_2}+(\gamma_2^\prime)^2}}$.
	From~\eqref{eq:metric}, \eqref{eq:levicivitaconnection} and the differential equation~\eqref{eq:diffeq-totumb} it follows that the second fundamental form associated to the immersion is given by
	\begin{equation}
		\label{eq:2ndff-totumb}
		\begin{aligned}
			&h(E_1,E_2) = h(E_1,W) = h(E_2,W) = 0, \\
			&h(W,W) = h(E_1,E_1) = h(E_2,E_2) \\
			&\hspace*{3.9em} =\frac{e^{2\gamma_2}(\gamma_1^{\prime\prime}\gamma_2^\prime - \gamma_1^\prime\gamma_2^{\prime\prime} + 4\gamma_1^\prime(\gamma_2^\prime)^2 + 2e^{4\gamma_2}(\gamma_1^\prime)^3)}{((\gamma_1^\prime)^2e^{4\gamma_2}+(\gamma_2^\prime)^2)^{3/2}} \, N.
		\end{aligned}
	\end{equation}
	The associated mean curvature vector field is given by
	\begin{equation}
		\label{eq:meancurv-totumb}
		H = \frac{e^{2\gamma_2}(\gamma_1^{\prime\prime}\gamma_2^\prime - \gamma_1^\prime\gamma_2^{\prime\prime} + 4\gamma_1^\prime(\gamma_2^\prime)^2 + 2e^{4\gamma_2}(\gamma_1^\prime)^3)}{((\gamma_1^\prime)^2e^{4\gamma_2}+(\gamma_2^\prime)^2)^{3/2}} \, N.
	\end{equation}
	From equations~\eqref{eq:2ndff-totumb}, \eqref{eq:meancurv-totumb} and the fact that $\set{E_1,E_2,W}$ is an orthonormal frame, it follows that $h(\cdot,\cdot) = g(\cdot,\cdot) H$ hence the immersion $F$ is totally umbilical.
\end{proof}

\begin{remark}
	\normalfont
Case~(b) in Theorem~\ref{thm:totally-umbilical} includes the totally geodesic hypersurface $\set{M(x,y,z,t) \in \sol \mid z = c}$ where $c \in \R$ (see Corollary~\ref{cor:tot-geod}).
Indeed, the calculation of the second fundamental form associated to the immersion $F$, equation~\eqref{eq:2ndff-totumb}, shows that $h=0$ if and only if $\gamma_1^\prime = 0$ which is equivalent to the $z$-coordinate of the immersion being constant.
\end{remark}

\bibliographystyle{plain}
\bibliography{main}

\end{document}